\documentclass[number]{elsarticle}
\usepackage{geometry}                
\geometry{letterpaper}                   
\usepackage{graphicx}
\usepackage{amssymb}
\usepackage{epstopdf}
\usepackage{amsthm}
\usepackage{amsmath}
\DeclareGraphicsRule{.tif}{png}{.png}{`convert #1 `dirname #1`/`basename #1 .tif`.png}
\newtheorem{thm}{Theorem}

\newtheorem{prop}[thm]{Proposition}
\newtheorem{cor}[thm]{Corollary}
\newdefinition{rmk}{Remark}
\newdefinition{Def}{Definition}
\newdefinition{ex}{Example}
\newcommand{\<}{\langle}
\renewcommand{\>}{\rangle}
\newcommand{\E}{\mathbb E}
\begin{document}
\title{White Noise Representation of Gaussian Random Fields}
\author{Zachary Gelbaum}
\ead {gelbaumz@math.oregonstate.edu}
\address{Department of Mathematics\\ Oregon State University\\ Corvallis, Oregon 97331-4605, USA}
\date{}                                           

\begin{abstract}
	
	We obtain a representation theorem for Banach space valued Gaussian random variables as integrals against a white noise.  As a corollary we obtain necessary and sufficient conditions for the existence of a white noise representation for a Gaussian random field indexed by a compact measure space.   As an application we show how existing theory for integration with respect to Gaussian processes indexed by $[0,1]$ can be extended to Gaussian fields indexed by compact measure spaces.
	
\end{abstract}
\begin{keyword}
white noise representation\sep Gaussian random field\sep stochastic integral
\end{keyword}
\maketitle
\section{Introduction}

Much of literature regarding the representation of Gaussian processes as integrals against white noise has focused on processes indexed by $\mathbb R$, in particular canonical representations (most recently see \cite{MR2599216} and references therein) and Volterra processes (e.g$.$ \cite{MR1849177, MR1999794}).  An example of the use of such integral representations is the construction of a stochastic calculus for Gaussian processes admitting a white noise representation with a Volterra kernel (e.g. \cite{MR1849177, MR1972287}).  

In this paper we study white noise representations for Gaussian random variables in Banach spaces, focusing in particular on Gaussian random fields indexed by a compact measure space.  We show that the existence of a representation as an integral against a white noise on a Hilbert space $H$ is equivalent to the existence of a version of the field whose sample paths lie almost surely in $H$.  For example as a consequence of our results a centered Gaussian process $Y_t$ indexed by $[0,1]$ admits a representation \[Y_t\stackrel{d}{=}\int_0^1h(t,z)dW(z)\] for some $h\in L^2([0,1]\times[0,1],d\nu\times d\nu)$, $\nu$ a measure on $[0,1]$ and $W$ the white noise on $L^2([0,1],d\nu)$ if and only if there is a version of $Y_t$ whose sample paths belong almost surely to $L^2([0,1],d\nu)$.  

The stochastic integral for Volterra processes developed in \cite{MR1972287} depends on the existence of a white noise integral representation for the integrator.  If there exists an integral representation for a given Gaussian field then the method in \cite{MR1972287} can be extended to define a stochastic integral with respect to this field.  We describe this extension for Gaussian random fields indexed by a compact measure space whose sample paths are almost surely square integrable.  

Section 2 contains preliminaries we will need from Malliavin Calculus and the theory of Gaussian measures over Banach spaces.  In section 3, Theorem 1 gives our abstract representation theorem and Corollary 2 specializes to Gaussian random fields indexed by a compact measure space.  Section 4 contains the extension of results in \cite{MR1972287}.

\section{Preliminaries}   
\subsection{Malliavin Calculus}
We collect here only those parts of the theory that we will explicitly use, see \cite{MR2200233}.  

\begin{Def}Suppose we have a Hilbert space $H$.  Then there exists a complete probability space $(\Omega,\mathcal F,\mathbb P)$ and a map $W:H\to L^2(\Omega,\mathbb P)$ satisfying the following:\begin{enumerate}\item $W(h)$ is a centered Gaussian random variable with $E[W(h)^2]=\|h\|_H$\item $E[W(h_1)W(h_2)]=\<h_1,h_2\>_H$
\end{enumerate}
This process is unique up to distribution and is called the {\em{Isonormal}} or {\em White Noise Process} on $H$.
\end{Def}

The classical example is $H=L^2[0,1]$ and $W(h)$ is the Wiener-Ito integral of $h\in L^2$.

Let $\mathcal S$ denote the set of random variables of the form \[F=f(W(h_1),...,W(h_n))\] for some $f\in C^\infty(\mathbb R^n)$ such that $f$ and all its derivatives have at most polynomial growth at infinity.  For $F\in\mathcal S$ we define the derivative as \[DF=\sum_1^n\partial_jf(W(h_1),...,W(h_n))h_j.\]  We denote by $\mathbb D$ the closure of $\mathcal S$ with respect to the norm induced by the inner product \[\langle F,G\rangle_{D}=E[FG]+E[\langle DF,DG\rangle_H].\]($\mathbb D$ is usually denoted $\mathbb D^{1,2}$.)

We also define a directional derivative for $h\in H$ as \[D_hF=\<DF,h\>_H.\]

$D$ is then a closed operator from $L^2(\Omega)$ to $L^2(\Omega,H)$ and $dom(D)=\mathbb D$.  Further, $\mathbb D$ is dense in $L^2(\Omega)$.  Thus we can speak of the adjoint of $D$ as an operator from $L^2(\Omega,H)$ to $L^2(\Omega)$.  This operator is called the divergence operator and denoted by $\delta$.

$dom(\delta)$ is the set of all $u\in L^2(\Omega,H)$ such that there exists a constant $c$ (depending on $u$) with \[|\E[\< DF,u\>_H]|\leq c\|F\|\] for all $F\in \mathbb D$.  For $u\in dom(\delta)$ $\delta(u)$ is characterized by \[\E[F\delta(u)]=\E[\<DF,u\>_H]\] for all $F\in\mathbb D$.

For examples and descriptions of the domain of $\delta$ see \cite{MR2200233}, section 1.3.1.

When we want to specify the isonormal process defining the divergence we write $\delta^W$.  We will also use the following notations interchangeably \[\delta^W(u)\,,\,\int udW.\]
\subsection{Gaussian Measures on Banach Spaces}
Here we collect the necessary facts regarding Gaussian measures on Banach spaces and related notions that we will use in what follows.  For proofs and further details see e.g. \cite{MR1642391,  MR2235463, MR2244975, MR0461643, MR1435288}.  All Banach spaces are assumed real and separable throughout.

\begin{Def}Let $B$ be a Banach space.  A probability measure $\mu$ on the borel sigma field $\mathcal B$ of $B$ is called Gaussian if for every $l\in B^*$ the random variable $l(x):(B,\mathcal{B},\mu)\to\mathbb R$ is Gaussian.  The mean of $\mu$ is defined as \[m(\mu)=\int_B xd\mu(x).\]  $\mu$ is called centered if $m(\mu)=0$.  The (topological) support of $\mu$ in $B$, denoted $B_0$, is defined as the smallest closed subspace of $B$ with $\mu$-measure equal to $1$.
\end{Def}

The mean of a Guassian measure is always an element of $B$, and thus it suffices to consider only centered Gaussian measures as we can then acquire any Gaussian measure via a simple translation of a centered one.  For the remainder of the paper all measures considered are centered.
\begin{Def} The covariance of a Gaussian measure is the bilinear form $C_\mu:B^*\times B^*\to\mathbb R$ given by \[C_\mu(k,l)=\mathbb E[k(X)l(X)]=\int_Bk(x)l(x)d\mu(x).\]
\end{Def}

Any gaussian measure is completely determined by its covariance: if for two Gaussian measures $\mu$, $\nu$ on $B$ we have $C_\mu=C_\nu$ on $B^*\times B^*$ then $\mu=\nu$.  

If $H$ is a Hilbert space then \[C_\mu(f,g)=\E[\<X,f\>\<X,g\>]=\int_B\<x,f\>\<x,g\>d\mu(x)\] defines a continuous, positive, symmetric bilinear form on $H\times H$ and thus determines a positive symmetric operator $K_\mu$ on $H$.  $K_\mu$ is of trace class and is injective if and only if $\mu(H)=1$.  Conversely, any positive trace class operator on $H$ uniquely determines a Guassian measure on $H$ \cite{MR2244975}.  Whenever we consider a Gaussian measure $\mu$ over a Hilbert space $H$ we can after restriction to a closed subspace assume $\mu(H)=1$ and do so throughout.
	
	We will denote by $H_\mu$ the Reproducing Kernel Hilbert Space (RKHS) associated to a Gaussian measure $\mu$ on $B$ .  There are various equivalent constructions of the RKHS.  We follow \cite{MR1435288} and refer the interested reader there for complete details.
	
	For any fixed $l\in B^*$, $C_\mu(l,\cdot)\in B$ (this is a non trivial result in the theory).  Consider the linear span of these functions, \[S=span\{C_\mu(l,\cdot)\ :\ l\in B^*\}.\]
Define an inner product on $S$ as follows: if $\phi(\cdot)=\sum_1^na_iC_\mu(l_i,\cdot)$ and $\psi(\cdot)=\sum_1^mb_jC_\mu(k_j,\cdot)$ then \[<\phi,\psi>_{H_\mu}\equiv \sum_1^n\sum_1^ma_ib_jC_\mu(l_i,k_j).\]
$H_\mu$ is defined to be the closure of $S$ under the associated norm $\|\cdot\|_{H_\mu}$.   This norm is stronger than $\|\cdot\|_B$, $H_\mu$ is a dense subset of $B_0$ and $H_\mu$ has the reproducing property with reproducing kernel $C_\mu(l,k)$:  \[\<\phi(\cdot),C_\mu(l,\cdot)\>_{H_\mu}=\phi(l) \qquad \forall\, l\in B^*,\, \phi\in H_\mu.\]
\begin{rmk} Often one begins with a collection of random variables indexed by some set, $\{Y_t\}_{t\in T}$.  For example suppose $(T,\nu)$ is a finite measure space.  Then setting $K(s,t)=\E[Y_sY_t]$ and supposing that application of Fubini-Tonelli is justified we have for $f,g\in L^2(T)$ \[\E[\<Y,f\>\<Y,g\>]=\int_T\int_T\E[Y_s,Y_t]f(s)g(t)d\nu d\nu=\<K(s,t)(f),g\>\] where we denote $\int_TK(s,t)f(s)d\nu(s)$ by $K(s,t)(f)$.  If one verifies that this last operator is positive symmetric and trace class then the above collection $\{Y_t\}_{t\in T}$ determines a measure $\mu$ on $L^2(T)$ and the above construction goes through with $C_\mu(f,g)=\<K(s,t)(f),g\>$ and the end result is the same with $H_\mu$ a space of functions over $T$.

\end{rmk}

Define $H_X$ to be the closed linear span of $\{X(l)\}_{l\in B^*}$ in $L^2(\Omega,\mathbb P)$ with inner product \hfill\\$\<X(l),X(l')\>_{H_X}=C_\mu(l,l')$ (again for simplicity assume $X$ is nondegenerate).  From the reproducing property we can define a mapping $R_X$ from $H_\mu$ to $H_X$ given initially on $S$ by \[R_X(\sum_1^n c_kC_\mu(l_k,\cdot))=\sum_1^k c_kX(l)\] and extending to an isometry.  This isometry defines the isonormal process on $H_\mu$.

In the case that $H$ is a Hilbert space and $\mu$ a Gaussian measure on $H$ with covariance operator $K$ it is known that $H_\mu=\sqrt{K}(H)$ with inner product $\<\sqrt{K}(x),\sqrt{K}(y)\>_{H_\mu}=\<x,y\>_H$.

It was shown in \cite{MR0260010} that given a Banach space $B$ there exists a Hilbert space $H$ such that $B$ is continuously embedded as a dense subset of $H$.  Any Gaussian measure $\mu$ on $B$ uniquely extends to a Gaussian measure $\mu_H$ on $H$.   The converse question of whether a given Gaussian measure on $H$ restricts to a Gaussian measure on $B$ is far more delicate.  There are some known conditions e.g$.$ \cite{MR0267614}.  The particular case when $X$ is a metric space and $B=C(X)$ has been the subject of extensive research \cite{MR2814399}.  Let us note here however that either $\mu(B)=0$ or $\mu(B)=1$ (an extension of the classical zero-one law, see \cite{MR1642391}).   

From now on we will not distinguish between a measure $\mu$ on $B$ and its unique extension to $H$ when it is clear which space we are considering.

\section{White Noise Representation}
\subsection{The General Case}
The setting is the following:  $B$ is a Banach space densely embedded in some Hilbert space $H$ (possibly with $B=H$), where $H$ is identified with its dual, $H=H^*$.  (A Hilbert space equal to its dual in this way is called a Pivot Space, see \cite{MR1782330}). 

	The classical definition of canonical representation has no immediate analogue for fields not indexed by $\mathbb R$, but the notion of strong representation does.  Let $L:H_\mu\to H$ be unitary.  Then $W_X(h)=R_X(L^*(h))$ defines an isonormal process on $H$ and $\sigma(\{W_X(h)\}_{h\in H})=\sigma(H_X)=\sigma(\{X(l)\}_{l\in B^*})$ where the last inequality follows from the density of $H$ in $B^*$.

We now state our representation theorem.

\begin{thm}Let $B$ be a Banach space, $\mu$ a Gaussian measure on $B$, and $C_\mu$ the covariance of $\mu$ on $B^*\times B^*$.  Then $\mu$ is the distribution of a random variable in $B$ given as a white noise integral of the form \[\tag{3.1}X(l)=\int h(l)dW.\] for some $h:B^*\to H$ and a Hilbert space $H$, where $h|_{H}$ is a Hilbert-Schmidt operator on $H$.  Moreover, the representation is strong in the following sense: $\sigma(\{W_X(h)\}_{h\in H})=\sigma(\{X(l)\}_{l\in B^*})$.

\end{thm}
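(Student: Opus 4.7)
The plan is to transport the RKHS structure of $\mu$ onto a fixed Hilbert space via a unitary map, which will provide both the isonormal process $W$ and the integrand $h$ in one stroke. First I would reduce to a Hilbert setting: by the Gross embedding cited in the preliminaries, $B$ embeds densely and continuously into a Hilbert space $H$, and $\mu$ extends uniquely to a Gaussian measure on $H$. Restricting to the support if necessary, we may assume $\mu(H)=1$, so the covariance operator $K_\mu$ on $H$ is injective, positive, and trace class, with $H_\mu=\sqrt{K_\mu}(H)$ and $\sqrt{K_\mu}\colon H\to H_\mu$ a unitary isomorphism via the identity $\langle\sqrt{K_\mu}x,\sqrt{K_\mu}y\rangle_{H_\mu}=\langle x,y\rangle_H$ recorded in the preliminaries.

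Next I would construct the white noise and the integrand. Take $L=(\sqrt{K_\mu})^{-1}\colon H_\mu\to H$; this is unitary, and setting $W_X(h):=R_X(L^*h)$ for $h\in H$ yields an isonormal process on $H$ with the strong representation property $\sigma(\{W_X(h)\}_{h\in H})=\sigma(\{X(l)\}_{l\in B^*})$ already established in the paragraph before the theorem statement. Define $h\colon B^*\to H$ by $h(l):=L(C_\mu(l,\cdot))$, which is meaningful because $C_\mu(l,\cdot)\in H_\mu$ for every $l\in B^*$. Since $R_X(C_\mu(l,\cdot))=X(l)$ by definition of $R_X$ and $L^*L=I_{H_\mu}$, we get
\[X(l)=R_X(C_\mu(l,\cdot))=R_X\bigl(L^*L\,C_\mu(l,\cdot)\bigr)=W_X\bigl(h(l)\bigr)=\int h(l)\,dW_X,\]
which is exactly the representation (3.1).

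Finally I would verify the Hilbert-Schmidt claim. The dense embedding $B\hookrightarrow H$ dualizes to $H=H^*\hookrightarrow B^*$, and for $l\in H\subset B^*$ the covariance formula $C_\mu(l,l')=\langle K_\mu l,l'\rangle_H$ identifies $C_\mu(l,\cdot)$ with $K_\mu l$; hence $h(l)=L(K_\mu l)=(\sqrt{K_\mu})^{-1}\bigl(\sqrt{K_\mu}\sqrt{K_\mu}\,l\bigr)=\sqrt{K_\mu}\,l$, so $h|_H=\sqrt{K_\mu}$, which is Hilbert-Schmidt because $K_\mu$ is trace class. I expect the main obstacle to be precisely this last identification: reconciling the extrinsic description of $C_\mu(l,\cdot)$ as an element of $B$ with the intrinsic description $K_\mu l$ on $H$ requires carefully using the Gross embedding and the fact that elements of $H\subset B^*$ act as linear functionals on $B$ through the Hilbert inner product. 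Everything else is bookkeeping with ingredients already assembled in the preliminaries.
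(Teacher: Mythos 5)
Your proposal is correct and follows essentially the same route as the paper: both define the kernel by $h(l)=L(C_\mu(l,\cdot))$ for a unitary $L:H_\mu\to H$, use the isonormal process $W_X=R_X\circ L^*$, and deduce the Hilbert--Schmidt property of $h|_H$ from the trace-class covariance operator (the paper via $k_L^*k_L=K$, you via the explicit choice $L=(\sqrt{K_\mu})^{-1}$ giving $h|_H=\sqrt{K_\mu}$). The only cosmetic difference is that you obtain $X(l)=W_X(h(l))$ as an almost-sure identity through the isometry $R_X$, whereas the paper matches covariances to conclude equality in distribution.
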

\begin{proof}
$B\subset H=H^*$ as above.  Let $W_X$ be the isonormal process constructed above and $C_\mu(l,k)$ the covariance of $\mu$. 
Let $L$ be a unitary map from $H_\mu$ to $H$ and define the function $k_L(l):B^*\to H$ by \[k_L(l)\equiv L(C_\mu(l,\cdot)).\]

Consider the Gaussian random variable determined by \[Y(l)\equiv\int k _L(l)dW_X.\]  We have \[Cov(Y(l_1),Y(l_2))=\<k_L(l_1),k_L(l_2)\>_H=\<C_\mu(l_1,\cdot),C_\mu(l_2,\cdot)\>_{H_\mu}=C_\mu(l_1,l_2)\] so that $\mu$ is the distribution of $Y(l)$ and \[X(l)\stackrel{d}{=}\int k _L(l)dW_X.\]  

It is clear that $k_L$ is linear and if $C_\mu(h_1,h_2)=\<K(h_1),h_2\>_H$, $h_1,h_2\in H$, then from above \[k_L^*k_L=K.\] Because $K$ is trace class this implies that $k_L$ is Hilbert-Schmidt on $H$.

From the preceding discussion we have  $\sigma(\{W_X(h)\}_{h\in H})=\sigma(\{X(l)\}_{l\in B^*})$.
\end{proof}
\begin{rmk}  While the statement of the above theorem is more general than is needed for most applications, this generality serves to emphasize that having a ``factorable" covariance and thus an integral representation are basic properties of all Banach space valued Gaussian random variables.
\end{rmk}
\begin{rmk}  The kernel $h(l)$ is unique up to unitary equivalence on $H$, that is if $L'=UL$ for some unitary $U$ on $H$ $L$ as above, then \[\int h_{L'}(l)dW\stackrel{d}{=}\int U\left(h_L(l)\right)dW\stackrel{d}{=}\int h_L(l)dW.\]  

\end{rmk}
\begin{rmk}  In the proof above, \[\tag{3.2}\<k_L(l_1),k_L(l_2)\>_H= C_\mu(l_1,l_2)\]is essentially the ``canonical factorization" of the covariance operator given in \cite{MR1404659}, although in a slightly different form.\end{rmk}
\begin{rmk}In the language of stochastic partial differential equations, what we have shown is that every Gaussian random variable in a Hilbert space $H$ is the solution to the operator equation \[ L(X)=W\] for some closed unbounded operator $L$ on $H$ with inverse given by a Hilbert-Schmidt operator on $H$.
\end{rmk}
\subsection{Gaussian Random Fields}

The proof of Theorem 1 has the following corollary for Gaussian random fields:

\begin{cor}Let $X$ be a compact Hausdorff space, $\nu$ a positive Radon measure and $H=L^2(X,d\nu)$.  If $\{B_x\}$ is a collection of centered Gaussian random variables indexed by $X$, then $\{B_x\}$ has a version with sample paths belonging almost surely $H$ if and only if \[\tag{3.3}B_x\stackrel{d}{=}\int h(x,\cdot)dW\] for some $h:X\to H$ such that the operator $K(f)\equiv \int_X h(x,z)f(z)d\nu(z)$ is Hilbert-Schmidt.  In this case (3.2) takes the form \[\E[B_xB_y]=\int_Xh(x,z)h(y,z)d\nu(z).\]
\end{cor}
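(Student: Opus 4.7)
The plan is to derive the forward implication from Theorem 1 applied to $B=H$, and to obtain the reverse by an explicit construction built from the kernel.

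For $(\Rightarrow)$, a version of $\{B_x\}$ with sample paths almost surely in $H = L^2(X, d\nu)$ is exactly a random element of $H$, whose law is a centered Gaussian measure $\mu$ on $H$. I would apply Theorem 1 with $B = H = H^*$, obtaining a Hilbert--Schmidt operator $h_0 : H \to H$ with $h_0^* h_0 = K$, where $K$ is the covariance operator of $\mu$, and a representation $X(l) = \int h_0(l)\,dW$. The Remark following Definition 4 identifies $K$ as the integral operator on $L^2(X, d\nu)$ with kernel $K(x, y) = \E[B_x B_y]$. Since Hilbert--Schmidt operators on $L^2(X, d\nu)$ are precisely the integral operators with kernels in $L^2(X \times X, d\nu \otimes d\nu)$, I would let $h(x, z)$ be the kernel of $h_0^*$, so that the resulting operator satisfies $hh^* = K$. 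By Fubini, $h(x, \cdot) \in H$ for $\nu$-a.e. $x$, so $\tilde B_x := \int h(x, \cdot)\,dW$ is defined, and the covariance computation
\[
\E[\tilde B_x \tilde B_y] = \langle h(x, \cdot), h(y, \cdot)\rangle_H = \int_X h(x, z) h(y, z)\, d\nu(z) = K(x, y) = \E[B_x B_y]
\]
shows that $\{\tilde B_x\}$ and $\{B_x\}$ have matching finite--dimensional distributions, yielding both (3.3) and (3.4).

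For $(\Leftarrow)$, suppose $B_x \stackrel{d}{=} \int h(x, \cdot)\,dW$ where $h(x, z)$ is the kernel of a Hilbert--Schmidt operator, so $h \in L^2(X \times X, d\nu \otimes d\nu)$. Then $h(x, \cdot) \in H$ for $\nu$-a.e. $x$ and $\tilde B_x := \int h(x, \cdot)\,dW$ is well defined on this set. The identity
\[
\E \int_X |\tilde B_x|^2\, d\nu(x) = \int_X \|h(x, \cdot)\|_H^2\, d\nu(x) = \|h\|_{L^2(X \times X)}^2 < \infty
\]
implies that $\tilde B$ has sample paths in $H$ almost surely; joint measurability of $(x, \omega) \mapsto \tilde B_x(\omega)$ follows by approximating $h$ by finite-rank kernels in an orthonormal basis of $H$ and passing to the limit in $L^2(X \times \Omega)$. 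Since $\tilde B$ and $B$ share the same covariance, $\tilde B$ is a version of $\{B_x\}$ with the required path regularity.

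The step I expect to be the main obstacle is the translation between the abstract Hilbert--Schmidt operator $h_0 : H \to H$ produced by Theorem 1 and the concrete kernel $h(x, z)$ on which one must ``evaluate at $x$'', since point evaluation is not a bounded functional on $L^2(X, d\nu)$ and so one cannot simply feed a point mass into $h_0$. The resolution is the canonical identification of Hilbert--Schmidt operators on $L^2(X, d\nu)$ with their $L^2(X \times X)$ integral kernels, together with Fubini and joint measurability via basis approximation, which together legitimize the passage from the abstract representation of Theorem 1 to the pointwise form in (3.3).
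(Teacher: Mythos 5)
Your proposal is correct and follows the same route the paper intends: the paper offers no separate proof of Corollary 2, asserting only that it follows from the proof of Theorem 1 with $B=H=L^2(X,d\nu)$, the covariance operator identified as the integral operator with kernel $\E[B_xB_y]$ (as in the Remark after Definition 4), and the unitary $L:H_\mu\to H$ supplying the factorization $\E[B_xB_y]=\int_X h(x,z)h(y,z)\,d\nu(z)$. You have simply filled in the details the paper leaves implicit, including the identification of the abstract Hilbert--Schmidt factor with an $L^2(X\times X)$ kernel and the converse construction, both of which are sound.
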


In other words, the field $B_x$ determines a Gaussian measure on $L^2(X,d\nu)$ if and only if $B_x$ admits an integral representation (3.3).

\subsection{Some Consequences and Examples}
In principle, all properties of a field are determined by its integral kernel.  Without making an exhaustive justification of this statement we give some examples:

In Corollary 2 above, being the kernel of a Hilbert-Schmidt operator, $h\in L^2(X\times X,d\nu\times d\nu)$.  This means that we can approximate $h$ by smooth kernels (supposing these are available).  If we assume $h(x,\cdot)$ is continuous as a map from $X$ to $H$ i.e. \[\lim_{x\to y}\|h(x,\cdot)-h(y,\cdot)\|_H=0\]  for each $y\in X$ and let $h_n\in C^\infty (X)$, $h_n\stackrel{L^2}{\to}h$ it follows that $\|h_n(x,\cdot)-h(x,\cdot)\|_H\to0$ pointwise so that if \[B^n_x=\int h_n(x,\cdot)dW\] we have \[\E[B^n_xB^n_y]\to\E[B_xB_y]\] point-wise.  This last condition is equivalent to \[B^n\stackrel{d}\to B\] and we can approximate in distribution any field over $X$ with a continuous (as above) kernel by fields with smooth kernels.

The kernel of a field over $\mathbb R^d$ describes its local structure \cite{MR1922445}: The limit in distribution of \[\lim_{\substack{r_n\to0\\ c_n\to0}}\frac{X{(t+c_nx)}-X(t)}{r_n}\] is \[\lim_{\substack{r_n\to0\\ c_n\to0}}\int \frac{h{(t+c_nx)}-h(t)}{r_n}dW\] where $h$ is the integral kernel of $X$, and this last limit is determined by the limit in $H$ of \[\lim_{\substack{r_n\to0\\ c_n\to0}}\frac{h{(t+c_nx)}-h(t)}{r_n}.\]

The representation theorem yields a simple proof of the known series expansion using the RKHS.  The setting is the same as in Theorem 1.

\begin{prop} Let $Y(l)$ be a centered Gaussian random variable in a Hilbert space $H$ with integral kernel $h(l)$.  Let $\{e_k\}_1^\infty$ be a basis for $H$.  Then there exist i.i.d. standard normal random variables $\{\xi_k\}$ such that 
\[Y(l)=\sum_1^\infty\xi_k\Phi_k(l)\] where $\Phi_k(l)=\<h(l),e_k\>_H$ and the series converges in $L^2(\Omega)$ and a.s.
\end{prop}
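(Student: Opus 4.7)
The plan is to prove the series expansion by expanding the kernel $h(l)\in H$ in the basis $\{e_k\}$ and pushing this expansion through the white noise integral, then upgrading $L^2$ convergence to almost sure convergence via the Itô--Nisio/Lévy theorem for independent series.

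First, since $h(l)\in H$ and $\{e_k\}$ is an orthonormal basis, I would write
\[
 h(l)=\sum_{k=1}^\infty\langle h(l),e_k\rangle_H\, e_k=\sum_{k=1}^\infty \Phi_k(l)\,e_k,
\]
where the series converges in $H$. Set $\xi_k:=W_X(e_k)$, where $W_X$ is the isonormal process on $H$ appearing in Theorem~1. Because $W_X$ is isonormal and the $e_k$ are orthonormal, $\mathbb E[\xi_j\xi_k]=\langle e_j,e_k\rangle_H=\delta_{jk}$ and each $\xi_k$ is centered Gaussian, so $\{\xi_k\}$ is a sequence of i.i.d.\ standard normals.

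Next, by linearity and the $L^2$-isometry property of the white noise integral, the partial sums $S_N(l):=\sum_{k=1}^N \Phi_k(l)\,\xi_k=\int\Bigl(\sum_{k=1}^N\Phi_k(l)\,e_k\Bigr)dW_X$ satisfy
\[
 \mathbb E\bigl[(Y(l)-S_N(l))^2\bigr]=\Bigl\|h(l)-\sum_{k=1}^N\Phi_k(l)\,e_k\Bigr\|_H^2\longrightarrow 0
\]
as $N\to\infty$. This establishes $L^2(\Omega)$-convergence.

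For the almost sure convergence I would invoke the Lévy--Itô--Nisio equivalence theorem: for a series of independent (in particular, independent Gaussian) random variables, convergence in probability, convergence in $L^2$, and almost sure convergence are all equivalent. Since the terms $\Phi_k(l)\,\xi_k$ are independent (they are jointly Gaussian and pairwise uncorrelated by orthogonality of $\{e_k\}$), $L^2$ convergence from the previous step immediately yields almost sure convergence. The only subtle step is this last one; everything else is a direct application of the isometry in Theorem~1 and basic Hilbert space expansion, so the appeal to Lévy's equivalence is really the substantive ingredient beyond the integral representation itself.
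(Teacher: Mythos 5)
Your proof is correct and follows essentially the same route as the paper: expand $h(l)$ in the basis, push the expansion through the white noise integral via the $L^2$-isometry, and identify $\xi_k=\int e_k\,dW$ as i.i.d.\ standard normals. The only difference is the last step, where the paper obtains almost sure convergence from the martingale convergence theorem applied to the $L^2$-bounded partial sums, whereas you invoke the L\'evy/It\^o--Nisio equivalence for sums of independent random variables; both are standard and equally valid here since $\{\Phi_k(l)\}\in \ell^2(\mathbb N)$.
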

\begin{proof}For each $l$ \[h(l){=}\sum_1^\infty \Phi_k(l)e_k.\]  We have \[Y(l)=\int\sum_1^\infty \Phi_k(l)e_kdW=\sum_1^\infty \Phi_k(l)\xi_k\] where $\{\xi_k\}=\{\int e_kdW\}$ are i.i.d$.$ standard normal as $\int dW$ is unitary from $H$ to $L^2(\Omega)$.  As $\{\Phi_k(l)\}\in l^2(\mathbb N)$ the series converges a.s$.$ by the martingale convergence theorem.
\end{proof}

\section {Stochastic Integration}
Combined with Theorem 1 above, \cite{MR1972287} furnishes a theory of stochastic integration for Gaussian processes and fields, which we now describe for the case of a random field with square integrable sample paths as in Corollary 2.

	Denote by $\mu$ the distribution of $\{B_x\}$ in $H=L^2(X,d\nu)$ and as above the RKHS of $B_x$ by $H_\mu\subset H$.  Let \[B_x=\int h(x,\cdot)dW\]  and $L^*(f)=\int h(x,y)f(y)d\nu(y)$.  Then $L^*:H\to H_\mu$ is an isometry and the map $v\mapsto R_B(L^*(v))\equiv W(v): H\to H_B$ ($H_B$ is the closed linear span of $\{B_x\}$ as defined in sec$.$ 2) defines an isonormal process on $H$.  Denote this particular process by $W$ in what follows.  

First note that as $H_\mu=L^*(H)$ and $L$ is unitary, it follows immediately that $\mathbb D^{1,2}_{H_\mu}=L^*(\mathbb D^{1,2}_H)$ where we use the notation in \cite{MR2200233, MR1972287} and the subscript indicates the underlying Hilbert space.

The following proof from \cite{MR1972287} carries over directly:  For a smooth variable $F(h)=f(B(L^*(h_1),...,B(L^*(h_n))$ we have 
\begin{align}\notag \E\<D^B(F),u\>_{H_\mu}=&\E\<\sum_1^n f'(B(L^*(h_1),...,B(L^*(h_n))L^*(h_k),u\>_{H_\mu}\\\notag&=\E\<\sum f'(B(L^*(h_1),...,B(L^*(h_n))h_k,L(u)\>_H\\&\notag=\E\<\sum f'(W(h_1),...,W(h_n))h_k,L(u)\>_H\\&\notag=\E\<D^{W}(F),L(u)\>_H \end{align} which establishes 
\[dom(\delta^B)= L^*(dom(\delta^{W}))\] and \[\int udB=\int L(u)d{W}.\]

The series approximation in \cite{MR1972287} also extends directly to this setting.
\begin{thm}If $\{\Phi_k\}$ is a basis of $H_\mu$ then there exists i.i.d. standard normal $\{\xi_k\}$ such that: \hfill

\begin{enumerate}\item If $f\in H_\mu$ and \[\int fdB=\sum_1^\infty\<f,\Phi_k\>_{H_\mu}\xi_k\quad a.s.\]
\item If $u\in\mathbb D_{H_\mu}$ then \[\int udB=\sum_1^\infty (\<u,\Phi_k\>_{H_\mu}-\<D^B_{\Phi_k}u,\Phi_k\>_{H_\mu})\quad a.s.\]
\end{enumerate}

\end{thm}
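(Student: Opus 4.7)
The strategy is to transport both assertions through the unitary map $L:H_\mu\to H$ (with inverse $L^*$) established in the preceding paragraph, thereby reducing them to the corresponding series expansions for the standard white noise $W$, which are available in \cite{MR1972287}. Given the basis $\{\Phi_k\}$ of $H_\mu$, set $e_k=L(\Phi_k)$; since $L$ is unitary this is an orthonormal basis of $H$. Define $\xi_k=W(e_k)=\int e_k\,dW$. Because $W$ is the isonormal process on $H$, the family $\{\xi_k\}$ is i.i.d.\ standard normal, and these are the variables serving both parts of the theorem.

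For part (1), two identities follow immediately from the setup: $\langle f,\Phi_k\rangle_{H_\mu}=\langle L(f),e_k\rangle_H$ by the unitarity of $L$, and $\int f\,dB=\int L(f)\,dW$ by the relation $\int u\,dB=\int L(u)\,dW$ recorded just above the theorem. Expanding $L(f)=\sum_k\langle L(f),e_k\rangle_H\,e_k$ in $H$ and integrating term-by-term against $W$ gives $\int f\,dB=\sum_k\langle f,\Phi_k\rangle_{H_\mu}\xi_k$; convergence in $L^2(\Omega)$ is the $W$-isometry on $H$, and almost-sure convergence follows from the martingale convergence theorem applied to the partial sums, exactly as in the preceding Proposition.

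For part (2), given $u\in\mathbb{D}_{H_\mu}$ write $u=L^*(v)$ with $v\in\mathbb{D}_H$, using the identification $\mathbb{D}^{1,2}_{H_\mu}=L^*(\mathbb{D}^{1,2}_H)$ noted above, and then invoke $\int u\,dB=\delta^W(v)$. Apply the series formula of \cite{MR1972287} to $\delta^W(v)$ against the basis $\{e_k\}$:
\[
\delta^W(v)=\sum_k\bigl(\langle v,e_k\rangle_H\,\xi_k-\langle D^W_{e_k}v,e_k\rangle_H\bigr).
\]
Translating back term-wise is purely algebraic: $\langle v,e_k\rangle_H=\langle u,\Phi_k\rangle_{H_\mu}$, while the chain-rule computation in the display preceding the theorem gives $D^B_{\Phi_k}=D^W_{e_k}$ on the common domain, so $\langle D^W_{e_k}v,e_k\rangle_H=\langle D^B_{\Phi_k}u,\Phi_k\rangle_{H_\mu}$. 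Matching terms yields the asserted formula.

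The principal technical point I anticipate is extending the derivative transport from smooth cylindrical variables, where it is handled by exactly the same computation used to prove $\mathrm{dom}(\delta^B)=L^*(\mathrm{dom}(\delta^W))$ in the section above, to general $u\in\mathbb{D}_{H_\mu}$; this requires a density argument together with the closedness of $D$. A secondary bookkeeping issue is that $D^B u$ for an $H_\mu$-valued $u$ lives in $H_\mu\otimes H_\mu$ and one must verify that $(L\otimes L)D^B u = D^W v$ so that the two double contractions $\langle D^B_{\Phi_k}u,\Phi_k\rangle_{H_\mu}$ and $\langle D^W_{e_k}v,e_k\rangle_H$ coincide; this is routine from the unitarity of $L$ and linearity of $D$. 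Apart from these points, everything is mechanical translation through $L$.
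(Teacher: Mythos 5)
Your proposal is correct and takes essentially the same approach as the paper: the paper's entire proof is the remark that (1) and (2) ``follow that in \cite{MR1972287},'' and your transport of the known series formulas through the unitary $L$ (setting $e_k=L(\Phi_k)$, $\xi_k=W(e_k)$, and using $\int u\,dB=\int L(u)\,dW$ together with $D^B_{\Phi_k}=D^W_{e_k}$) is precisely the reduction that remark leaves implicit. Note only that your derivation of (2) yields the first summand $\langle u,\Phi_k\rangle_{H_\mu}\,\xi_k$, which confirms that the factor $\xi_k$ absent from the paper's displayed formula is a typographical omission rather than a discrepancy in your argument.
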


\begin{proof}  The proof of (1) and (2) follows that in \cite{MR1972287}.

\end{proof}

\begin{rmk}  For our purposes the method of approximation via series expansions above seems most appropriate.  However in \cite{MR1849177} a Riemann sum approximation is given under certain regularity hypotheses on the integral kernel of the process, and this could be extended in various situations as well.
\end{rmk}
\begin{rmk}The availability of the kernel above suggests the method in \cite{MR1849177} whereby conditions are imposed on the kernel in order to prove an Ito Formula as promising for extension to more general settings.
\end{rmk}

\section{References}

\bibliographystyle{plain}
\bibliography{article}
\end{document}